\DeclareSymbolFont{sfoperators}{OT1}{cmss}{m}{n}
\DeclareSymbolFontAlphabet{\mathsf}{sfoperators}
\def\operator@font{\mathgroup\symsfoperators}
\DeclareMathOperator{\bpg}{BPG}
\DeclareMathOperator{\chain}{CHAIN}
\DeclareMathOperator{\conv}{CONV}
\crefname{enumi}{point}{points} 
\Crefname{enumi}{Point}{points} 
\newlist{myenum}{enumerate}{3}
\setlist[myenum,1]{label=\textbf{\alph*.},
                   ref  =\alph*}
\setlist[myenum,2]{label=\bfseries(\roman*),
                   ref  =\themyenumi\textbf{.(\roman*)}}
\crefname{myenumi}{point}{points}
\crefname{myenumii}{point}{points}
\Crefname{myenumi}{Point}{Points}
\Crefname{myenumii}{Point}{Points}
\def\square{\vbox{\hrule height.2pt\hbox{\vrule width.2pt height5pt \kern5pt
\vrule width.2pt} \hrule height.2pt}}
\par\vspace{4mm}}
\newtheorem{lemma}{Lemma}
\newtheorem{theorem}{Theorem}
\newtheorem{observation}{Observation}
\crefname{enumi}{point}{points} 
\Crefname{enumi}{Point}{Points} 
\crefname{algocf}{algorithm}{algorithms} 
\Crefname{algocf}{Algorithm}{Algorithm}
\title{A Freeable Matrix Characterization of Bipartite Graphs of Ferrers Dimension Three}
\author{
Parinya Chalermsook \thanks{The University of Sheffield, UK. E-mail: {\tt chalermsook@gmail.com}.}
\and
Ly Orgo \thanks{Aalto University, Finland. E-mail: {\tt ly.orgo@aalto.fi}.}
\and
Minoo Zarsav \thanks{Aalto University, Finland. E-mail: {\tt minoo.zarsav@aalto.fi}.}
}
\newcommand{\shorttitle}{\@title}
\begin{document}

\maketitle
\begin{abstract}
Ferrers dimension, along with the order dimension, is a standard dimensional concept for bipartite graphs.  In this paper, we prove that a graph is of Ferrers dimension three (equivalent to the intersection bigraph of orthants and points in ${\mathbb R}^3$) if and only if it admits a biadjacency matrix representation that does not contain $\Gamma= \begin{psmallmatrix}
            * & 1 & * \\
            1 & 0 & 1 \\
            0 & 1 & * 
            \end{psmallmatrix} \mbox{ and }
            \Delta = \begin{psmallmatrix}
            1 & * & *  \\
            0 & 1 & * \\
            1 & 0 & 1
            \end{psmallmatrix}$, where $*$ denotes a zero or one entry. 
\end{abstract}

\section{Introduction}

Special graph classes (or structured graphs) have played crucial roles in combinatorics and computer science, arising in many application domains. For instance, in the area of algorithm designs, special graph classes have been used, in the past decades, to model real-world data sets. 
These graphs are often equipped with geometric, topological, or combinatorial properties that can be computationally leveraged. 

In this paper, we consider the class of bipartite graphs that have Ferrers dimension three, denoted by $\chain^3$ (see Section~\ref{sec:prelim} for the definition of Ferrers dimension). In a geometric language, it is equivalent to the intersection bigraph of \textbf{3D orthants} and \textbf{points} in ${\mathbb R}^3$: A bipartite graph $G = (A \cup B,E)$ is of Ferrers dimension three if and only if each vertex $a \in A$ is associated with a point $p_a \in {\mathbb R}^3$ and each vertex $b \in B$ with an orthant $O_b = (-\infty, x_b) \times (-\infty, y_b) \times (-\infty, z_b)$ such that $(a,b) \in E$ if and only if $p_a \in O_b$. The class of 3D orthants arise naturally in data structures (see, e.g.,~\cite{chan2011orthogonal}) and more recently in extremal combinatorics~\cite{DBLP:conf/soda/ChanH23,GD-accept}. 
Ferrers dimension is a standard dimensionality concept closely related to order dimension and interval dimension~\cite{felsner1994interplay}. Well-known classes of bipartite graphs (e.g., orthogonal ray graphs~\cite{chaplick2014intersection} and grid intersection graphs~\cite{hartman1991grid}) have been shown to have constant Ferrers dimension. 

It is common for a graph class to admit several equivalent characterizations.
The main contribution of this paper is a characterization of $\chain^3$ in terms of a freeable matrix property. Let us start by defining the terminologies.
Let $P$ be a 0/1 matrix. We say that matrix $M$ \textbf{contains} $P$ if a submatrix of $M$ is equal to $P$; otherwise, when $M$ does not contain $P$, we say that $M$ is $P$-\textbf{free}. We often refer to submatrix $P$ in this role as a \textbf{pattern}. 

When we allow the entries of $P$ to be $\{0,1,*\}$, a star can represent either zero or one. 
We say that matrix $M'$ \textbf{matches with} $P$ if for all row $i$ and column $j$, we have $M'[i,j] = P[i,j]$ or $P[i,j]=*$. 
Extending the notion of containment, matrix $M$ contains $P$ if a submatrix of $M$ matches with $P$, so $M$ is $P$-free when it is free of all matrices $P'$ obtained by replacing each $*$ of $P$ by either zero or one. 
A graph $G$ is said to be $P$-\textbf{freeable} if there exists a biadjacency matrix representation of $G$ that is $P$-free. 

Many (geometric) bipartite graph classes are known to admit both intersection bigraph representation and freeable matrix characterization. Table~\ref{fig:freeable_overview} summarizes the existing results for graph classes in the context of our work.

\begin{table}[h]
\centering
    \begin{tabular}[b]{lll}
    \hline 
    \textbf{Graph classes} & \textbf{Freeable Pattern} & \textbf{References}  \\ \hline
        $\chain$ &  $\begin{psmallmatrix}
            0 & 1 
            \end{psmallmatrix}$  & \cite{das1989interval} \\ 
        $\bpg$ & $\begin{psmallmatrix}
            1 & 0 \\
            * & 1 
            \end{psmallmatrix}$ $\begin{psmallmatrix}
            1 & * \\
            0 & 1 
            \end{psmallmatrix}$  & \cite{chen1993efficient} \\ 
        $\conv$ &  $\begin{psmallmatrix}
            1 & 0 & 1
            \end{psmallmatrix}$  & \cite{brandstfidt1991special} 
            \\ 
        $\chain^2$ &  $\begin{psmallmatrix}
            1 & * \\
            0 & 1 
            \end{psmallmatrix}$  & \cite{shrestha2010orthogonal} \\ 
        Chordal Bipartite &  $\begin{psmallmatrix}
            1 & 1 \\
            0 & 1 
            \end{psmallmatrix}$  & \cite{klinz1992permuting} \\ 
        Stick Graphs &  $\begin{psmallmatrix}
            * & 1 & *\\
            1 & 0 & 1
            \end{psmallmatrix}$ \& 
            $\begin{psmallmatrix}
            1 & *\\
            0 & 1\\
            1 & *
            \end{psmallmatrix}$ \& 
            $\begin{psmallmatrix}
            * & 1 & *\\
            * & 0 & 1\\
            1 & * & *
            \end{psmallmatrix}$   & \cite{de2019recognition} \\ 
        Segment Ray &  $\begin{psmallmatrix}
            * & 1 & *\\
            1 & 0 & 1
            \end{psmallmatrix}$  & \cite{chaplick2014intersection} \\ 
        Grid Intersection (GIG) &  $\begin{psmallmatrix}
            * & 1 & *\\
            1 & 0 & 1\\
            * & 1 & *
            \end{psmallmatrix}$  & \cite{hartman1991grid} \\ 
        $\mathbf{\chain^3}$ &  $\begin{psmallmatrix}
            * & 1 & * \\
            1 & 0 & 1 \\
            0 & 1 & * 
            \end{psmallmatrix}$ \& 
            $\begin{psmallmatrix}
            1 & * & *  \\
            0 & 1 & * \\
            1 & 0 & 1
            \end{psmallmatrix}$   & this paper \\ \hline
    \end{tabular}
\qquad
\includegraphics[scale=0.14]{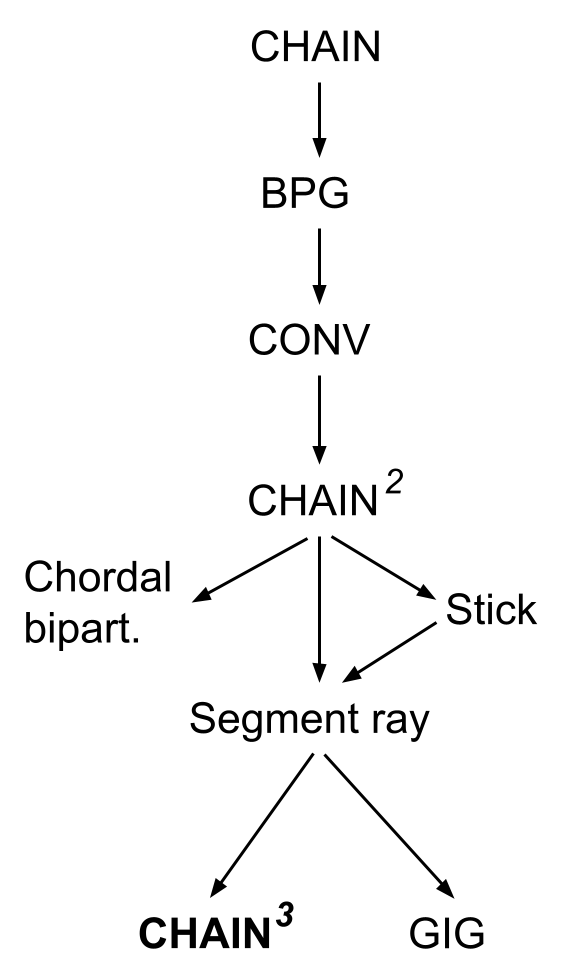}
\captionlistentry[table]{Table}
\caption{Freeable matrix characterizations and (right-hand-side) relations between the corresponding graph classes. The arrow from  graph class $X$ to $Y$ denotes that $X$ is a subclass of $Y$. The graph class $\chain^d$ contains all graphs of Ferrers dimension $d$. \label{fig:freeable_overview}}
\end{table}

\begin{theorem}
\label{thm: main}
A bipartite graph $G$ has Ferrers dimension three if and only if it is $\Gamma$ and $\Delta$-freeable where $\Gamma= \begin{psmallmatrix}
            * & 1 & * \\
            1 & 0 & 1 \\
            0 & 1 & * 
            \end{psmallmatrix} \mbox{ and }
            \Delta = \begin{psmallmatrix}
            1 & * & *  \\
            0 & 1 & * \\
            1 & 0 & 1
            \end{psmallmatrix}$   
\end{theorem}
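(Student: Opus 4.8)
The plan is to work throughout with the equivalence between the geometric definition and a decomposition into Ferrers relations: $G\in\chain^3$ iff its biadjacency relation $R$ can be written as $R=R_1\cap R_2\cap R_3$ with each $R_i$ a Ferrers relation, which is exactly the statement that each vertex of $A$ gets a point and each vertex of $B$ an orthant, coordinate $i$ supplying $R_i$. Passing to complements, this is equivalent to covering the $0$-cells of some biadjacency matrix $M$ of $G$ by three \emph{co-staircases} (complements of Ferrers relations that introduce $0$'s only at $0$-cells of $M$). I will use the basic obstruction that two $0$-cells lying on opposite corners of a $2\times 2$ submatrix whose other two corners are $1$, i.e. an occurrence of $\begin{psmallmatrix}1&0\\0&1\end{psmallmatrix}$ or $\begin{psmallmatrix}0&1\\1&0\end{psmallmatrix}$, can never be placed in a common co-staircase, since that would force two non-nested rows. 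This defines a conflict graph $H(M)$ on the $0$-cells and gives the easy bound that the Ferrers dimension is at least $\chi(H(M))$; the whole difficulty is to turn a colouring into an actual cover and to control $\chi$.

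\textbf{From a representation to a pattern-free order.} For the direction $G\in\chain^3\Rightarrow G$ is $\{\Gamma,\Delta\}$-freeable, I start from points $p_a$ and orthant bounds $\beta_b=(x_b,y_b,z_b)$ in general position. Each coordinate $i$ linearly orders $A$ (by $p_a^{(i)}$) and $B$ (by the $i$-th bound), and a $0$-cell $(a,b)$ is \emph{blocked in coordinate $i$} when $p_a^{(i)}\ge \beta_b^{(i)}$. I would fix one master order on $A$ and on $B$ obtained by reconciling the three coordinate orders (a linear extension chosen so that coordinate $1$ is respected and ties are broken by coordinates $2,3$), and then argue by contradiction: an occurrence of $\Gamma$ or of $\Delta$ pins down, through its prescribed $1$-entries, a chain of coordinatewise inequalities that force some $p_{a}$ to be simultaneously inside and outside some $O_{b}$. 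The verification is a finite case analysis over which coordinate blocks each of the two mandatory $0$-entries of the pattern; the content is that the two patterns are \emph{minimal} obstructions, so each case collapses to an impossible comparison in a single coordinate.

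\textbf{From a pattern-free order to a representation (the core).} For the converse, assume $M$ is a biadjacency matrix of $G$ that is $\Gamma$-free and $\Delta$-free in its given row/column order, and construct the three Ferrers relations. I would proceed by induction, peeling off an extreme column (equivalently an orthant), applying the inductive hypothesis to obtain a $3$-dimensional representation of the smaller graph, and then \emph{inserting} the removed orthant by choosing its three bounds. The feasibility of this insertion is exactly where $\Gamma$- and $\Delta$-freeness are spent: the constraints imposed by the already-placed points split, according to which coordinate must separate them from the new orthant, into three monotone families, and $\Gamma$-freeness (respectively $\Delta$-freeness) is what guarantees that the ``must be blocked in coordinate $i$'' requirements are \emph{nested} rather than crossing, so that a consistent triple of bounds exists. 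Equivalently, in the conflict-graph language, these two patterns are precisely the certificates that a natural greedy $3$-colouring of the $0$-cells never creates a conflict and that every colour class closes up to a genuine co-staircase.

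\textbf{Main obstacle.} The crux is the last step: showing that avoiding the two \emph{local}, $3\times3$ patterns $\Gamma$ and $\Delta$ rules out all \emph{global} obstructions to a $3$-dimensional representation. A proper $3$-colouring of $H(M)$ is necessary but in general far from sufficient for Ferrers dimension $3$, because colour classes must simultaneously be nestable and mutually consistent; long ``cyclic'' configurations can obstruct realizability even when every pair of $0$-cells is compatible. The heart of the proof is therefore a structural lemma asserting that $\{\Gamma,\Delta\}$-freeness propagates: whenever the insertion (or the colour-class closure) would fail, one can extract a short certificate that is, after reordering, a copy of $\Gamma$ or of $\Delta$. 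Establishing this no-long-obstruction statement — that the two minimal patterns already capture every way in which the third dimension can be forced — is the step I expect to be the most delicate, and it is what distinguishes $\chain^3$ from the strictly larger classes (such as grid intersection graphs) whose single ``cross'' pattern $\begin{psmallmatrix}*&1&*\\1&0&1\\*&1&*\end{psmallmatrix}$ imposes a weaker condition.
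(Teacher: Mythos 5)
Your proposal is a plan rather than a proof, and both directions have concrete gaps. In the ``only if'' direction, your choice of master order fails: with points and bounds in general position there are no ties in coordinate~1, so your ``coordinate~1 respected, ties broken by coordinates~2,3'' order degenerates to the coordinate-1 order alone and carries no information about the other two chain graphs. To see this cannot work, take $G_1$ complete; then $G=G_2\cap G_3\in\chain^2$ and your order is arbitrary relative to $G_2,G_3$, so you would be claiming that \emph{every} row/column ordering of a $\chain^2$ graph avoids $\Gamma$ and $\Delta$ --- false, since both patterns contain $\begin{psmallmatrix}1&0\\0&1\end{psmallmatrix}$, a $\chain^2$ graph can contain an induced $2K_2$, and a bad ordering realizing $\Gamma$ is easy to produce. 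The paper instead takes the ordering supplied by the $\chain^2$ characterization (Lemma~\ref{thm: chain2 freeable}) applied to $G'=G_1\cap G_2$, i.e.\ an order in which $A'$ avoids $D=\begin{psmallmatrix}1&*\\0&1\end{psmallmatrix}$; then each mandatory $0$ of $\Gamma$ (resp.\ $\Delta$) forces, via $D$-freeness of $A'$, a $0$ in $A_3$, and two such forced zeros together with two prescribed ones give $\begin{psmallmatrix}1&0\\0&1\end{psmallmatrix}$ inside $A_3$, contradicting the induced-matching-freeness of chain graphs (Lemma~\ref{thm: chain induced matching}). Your ``finite case analysis over which coordinate blocks each $0$'' is never carried out, and with your order it has no valid starting point.

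In the ``if'' direction --- which you correctly identify as the core --- you explicitly leave the decisive step unproved (``Establishing this no-long-obstruction statement \dots is the step I expect to be the most delicate''), and your peeling induction is underspecified: you never say what an ``extreme'' column is, nor why the insertion constraints form nested rather than crossing families; that nestedness claim \emph{is} the theorem, not a reduction of it. The paper's route is different and fully constructive: it first builds a canonical $\chain^2$ relaxation $A_{1,2}=A_1\odot A_2\ge A$ (leftward rays to the rightmost $1$ in each row, downward rays to the topmost $1$ in each column, Lemma~\ref{maximal_chain2_submatrix}), which guarantees that every leftover zero $0^*$ (zero in $A$, one in $A_{1,2}$) lies in a configuration $\begin{psmallmatrix}1&*\\0^*&1\end{psmallmatrix}$; it then translates $\Gamma$- and $\Delta$-freeness into four forbidden \emph{local} patterns on the alphabet $\{1,0',0^*\}$ (Lemma~\ref{A_free_matrices}); finally an explicit greedy column-reordering algorithm (\Cref{alg:U_order}) produces an order avoiding $\begin{psmallmatrix}1&0^*\end{psmallmatrix}$, verified by induction with a four-case analysis (Lemma~\ref{direction2}), so the $0^*$-entries are exactly covered by a third chain graph and $A=A_{1,2}\odot A_3$. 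This two-stage construction is precisely the mechanism by which the local $3\times 3$ patterns defeat the global obstructions you worry about; your sketch names the difficulty but supplies no mechanism to resolve it.
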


\section{Preliminaries}
\label{sec:prelim}


A bipartite graph $G=(U\cup V, E)$ is a {\bf chain graph} ($\chain$), if its vertices can be linearly ordered as $U=\{u_1, u_2, \ldots, u_{|U|} \}$ and $V=\{v_1, v_2, \ldots, v_{|V|} \}$ so that we have the chains of neighbors, $N(u_1)\subseteq N(u_2)\subseteq \ldots\subseteq N(u_{|U|})$ and $N(v_1)\subseteq N(v_2)\subseteq \ldots\subseteq N(v_{|V|})$. Chain graphs are also called difference graphs, Ferrers bigraphs, and induced $2K_2$-free graphs \cite{hartman1991grid}. 
The graph class is exactly the \textbf{intersection bigraph of rays and points} in ${\mathbb R}$ (that is, it is representable as points and rays on a real line, such that there is an edge $\{u, v\}$ for some $u \in U$, $v \in V$ if and only if the ray representing $v$ contains the point representing $u$ \cite{chaplick2014intersection}.) 

\begin{lemma}[\cite{das1989interval}]
\label{thm: chain freeable}
Chain graphs are equivalent to graphs whose biadjacency matrices are $\begin{psmallmatrix} 1 & 0\end{psmallmatrix}$-freeable.  
\end{lemma}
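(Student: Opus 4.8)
The plan is to decode the pattern condition combinatorially and then match it directly against the chain ordering. Recall that a biadjacency matrix representation of $G=(U\cup V,E)$ amounts to a choice of orderings of $U$ (the rows) and $V$ (the columns), so $G$ being $\begin{psmallmatrix}1&0\end{psmallmatrix}$-freeable means that some such ordering yields a matrix avoiding the $1\times 2$ pattern $\begin{psmallmatrix}1&0\end{psmallmatrix}$. Since a submatrix preserves the relative order of its rows and columns, avoiding $\begin{psmallmatrix}1&0\end{psmallmatrix}$ says exactly that no row contains a $1$ strictly to the left of a $0$; equivalently, once a row has a $1$, every later entry of that row is $1$. Thus a matrix is $\begin{psmallmatrix}1&0\end{psmallmatrix}$-free if and only if every row has the form $0^a1^b$, i.e.\ the support (neighborhood) of each row vertex is a suffix $\{c_i,\dots,n\}$ of the column indices. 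The elementary fact I will lean on is that any two such suffixes are nested, so these row neighborhoods are automatically totally ordered by inclusion.

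For the forward direction, I would start from a chain graph and order the columns so that $N(v_1)\subseteq N(v_2)\subseteq\cdots\subseteq N(v_n)$, which is possible by the definition of $\chain$; the row order may be left arbitrary. Then whenever $M[i,j]=1$, i.e.\ $u_i\in N(v_j)$, nestedness gives $u_i\in N(v_k)$ for every $k>j$, so $M[i,k]=1$. Hence every row has the form $0^a1^b$, and by the decoding above the resulting matrix is $\begin{psmallmatrix}1&0\end{psmallmatrix}$-free, which shows that $G$ is $\begin{psmallmatrix}1&0\end{psmallmatrix}$-freeable.

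For the converse, I would take any $\begin{psmallmatrix}1&0\end{psmallmatrix}$-free biadjacency matrix $M$ of $G$. By the decoding, each row support is a suffix, so writing $c_i$ for the position of the first $1$ in row $i$ we obtain $N(v_j)=\{u_i : c_i\le j\}$; these column neighborhoods already form an increasing chain in the given column order, while the row neighborhoods, being nested suffixes, can be made increasing simply by reordering the rows according to decreasing $c_i$. Reordering the rows does not disturb the column chain, since it only permutes elements inside the fixed nested sets $N(v_j)$, so both ordering conditions hold simultaneously and $G$ is a chain graph.

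The substance of the argument is concentrated in the first paragraph's decoding; the only points that deserve care are the bookkeeping that one-sided nestedness (on the rows) automatically yields the two-sided chain structure demanded by the definition, and a careful check of the order-preserving submatrix convention, so that it is indeed $\begin{psmallmatrix}1&0\end{psmallmatrix}$ (and not $\begin{psmallmatrix}0&1\end{psmallmatrix}$) that is being forbidden. Neither of these is a genuine obstacle, so I expect the proof to be short and essentially mechanical once the pattern is translated into the ``suffix'' description of row neighborhoods.
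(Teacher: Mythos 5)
Your proof is correct. The paper does not prove this lemma at all---it imports it from \cite{das1989interval}---so there is no internal argument to compare against; your suffix decoding (a row avoids $\begin{psmallmatrix}1 & 0\end{psmallmatrix}$ iff it has the form $0^a1^b$, and suffixes are automatically nested, so the converse follows by sorting rows by decreasing first-one position $c_i$, with all-zero rows handled as the empty suffix) is exactly the standard proof of this equivalence, and both directions, including the observation that the row reordering cannot break the column chain, are sound.
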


Moreover, chain graphs do not contain an induced matching of size two. In the forbidden matrix terminology, this can be stated as follows: 

\begin{lemma}[\cite{hartman1991grid}]
\label{thm: chain induced matching}  
Let $A$ be a biadjacency matrix of a chain graph (any orderings of rows and columns). Then $A$ does not contain pattern $\begin{psmallmatrix}
1 & 0  \\
0 & 1  
\end{psmallmatrix}$. 
\end{lemma}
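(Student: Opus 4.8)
The plan is to argue by contradiction, showing that any occurrence of the pattern $\begin{psmallmatrix} 1 & 0 \\ 0 & 1 \end{psmallmatrix}$ in $A$ would force an induced $2K_2$ in $G$, which is impossible for a chain graph. First I would fix a chain ordering $U = \{u_1,\dots,u_{|U|}\}$, $V = \{v_1,\dots,v_{|V|}\}$ witnessing $N(u_1) \subseteq \cdots \subseteq N(u_{|U|})$, and note that this ordering is an intrinsic property of $G$, entirely independent of whatever ordering of rows and columns was used to write down the particular biadjacency matrix $A$ under consideration.

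Now suppose $A$ contains the pattern. Then there are two rows and two columns whose $2\times 2$ submatrix equals $\begin{psmallmatrix} 1 & 0 \\ 0 & 1 \end{psmallmatrix}$. Let $u,u' \in U$ be the vertices indexing these two rows and $v,v' \in V$ those indexing the two columns, with the labeling arranged so that the two diagonal $1$'s read $A[u,v]=1$ and $A[u',v']=1$ and the two off-diagonal $0$'s read $A[u,v']=0$ and $A[u',v]=0$. Translating back to adjacencies in $G$, this says $u \sim v$, $u' \sim v'$, $u \not\sim v'$, and $u' \not\sim v$; that is, $\{u,u'\}\cup\{v,v'\}$ induces a $2K_2$.

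Next I would invoke the nesting of neighborhoods guaranteed by the chain property: the $U$-neighborhoods are linearly ordered by inclusion, so either $N(u) \subseteq N(u')$ or $N(u') \subseteq N(u)$. In the first case, $v \in N(u) \subseteq N(u')$ forces $u' \sim v$, contradicting $u' \not\sim v$; in the second case, $v' \in N(u') \subseteq N(u)$ forces $u \sim v'$, contradicting $u \not\sim v'$. Either way we reach a contradiction, so no occurrence of the pattern can exist. (Alternatively, one can simply cite the equivalence noted in the preliminaries that chain graphs are exactly the induced $2K_2$-free bipartite graphs, but the nesting argument keeps the proof self-contained.)

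The only subtlety, and the reason the statement quantifies over \emph{all} orderings, is that the pattern $\begin{psmallmatrix} 1 & 0 \\ 0 & 1 \end{psmallmatrix}$ is not itself permutation-invariant: swapping two columns turns it into $\begin{psmallmatrix} 0 & 1 \\ 1 & 0 \end{psmallmatrix}$. I expect this to be the one point that needs care. The resolution is to observe that the contradiction above never refers to the positions or the relative order of the chosen rows and columns; it uses only the four adjacency values, whose combinatorial content (an induced $2K_2$) is invariant under reordering of rows and columns. Hence the argument applies verbatim no matter how the rows and columns of $A$ are permuted, which yields the claim for every biadjacency matrix representation of $G$.
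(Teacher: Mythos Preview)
Your proof is correct. Note, however, that the paper does not actually give its own proof of this lemma: it is stated with a citation to~\cite{hartman1991grid} and used as a black box. So there is no ``paper's proof'' to compare against; you have supplied the standard argument that the cited reference would contain, namely that the pattern encodes an induced $2K_2$, which is incompatible with the nesting of neighborhoods that defines a chain graph. Your handling of the ordering-invariance point is also fine: since the contradiction depends only on the four adjacency values and not on row/column positions, it applies to every biadjacency matrix of $G$.
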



For two graphs $G_1 = (V, E_1)$ and $G_2 = (V, E_2)$ on the same set of vertices, the \textit{intersection} $G_1\cap G_2$ is defined as $(V, E_1\cap E_2)$. For positive integer $d$, we use $\chain^d$ to denote the class of all graphs $G$ that can be written as an intersection of $d$ chain graphs, i.e., there exist, for all $i \in [d]$, graph $G_i \in \chain$ such that $G=\bigcap_{i\in [d]} G_i$.
\textbf{Ferrers dimension} of a bipartite graph $G$ is the minimum $d$, such that $G \in \chain^d$ \cite{hartman1991grid}.

\begin{lemma}[\cite{shrestha2010orthogonal}]
\label{thm: chain2 freeable}
Bipartite graph $G$ is in $\chain^2$ if and only if $G$ is $\begin{psmallmatrix}
            1 & * \\
            0 & 1 
            \end{psmallmatrix}$-freeable.  
\end{lemma}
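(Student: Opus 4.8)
The plan is to first reduce the forbidden pattern to a simple local condition and then prove each implication by an explicit construction. The key observation is that, for a $0/1$ matrix $M$ in a fixed row/column order, $M$ is $\begin{psmallmatrix}1 & * \\ 0 & 1\end{psmallmatrix}$-free if and only if no zero entry has both a one strictly above it in its column and a one strictly to its right in its row. Indeed, a copy of the pattern places the bottom-left $0$ at some cell $(i',j)$, a $1$ in the same column above it (the top-left entry, row $i<i'$) and a $1$ in the same row to its right (the bottom-right entry, column $j'>j$), while the top-right $*$ is unconstrained; conversely any zero with a one above and a one to the right yields such a copy. Call a zero entry of $M$ \emph{up-free} if it has no one above it in its column, and \emph{right-free} if it has no one to its right in its row; the observation says $M$ is $P$-free iff every zero is up-free or right-free. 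I would isolate this as a preliminary claim and use it in both directions.

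For the direction that $P$-freeability implies membership in $\chain^2$, I would start from a $P$-free matrix $M$ of $G$ and build two chain graphs explicitly. For each column $j$ let $\tau(j)$ be the index of its topmost one, and for each row $i$ let $\rho(i)$ be the index of its rightmost one. Define $G_1$ by making the $j$-th column the suffix of rows $\{i : i \ge \tau(j)\}$, and $G_2$ by making the $i$-th row the prefix of columns $\{j : j \le \rho(i)\}$. Since suffixes (resp.\ prefixes) are totally ordered by inclusion, the column-neighborhoods of $G_1$ and the row-neighborhoods of $G_2$ are nested, so both graphs are $2K_2$-free, i.e.\ chain graphs. Every edge of $G$ is an edge of both $G_1$ and $G_2$ (a one in column $j$ lies at or below $\tau(j)$, and at or left of $\rho(i)$ in its row), so $G \subseteq G_1 \cap G_2$. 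For the reverse inclusion, a cell in $G_1\cap G_2$ that is a zero of $G$ would satisfy $i>\tau(j)$ and $j<\rho(i)$ (equality is impossible, as it would force the entry to be a one), hence would be a zero with a one above and a one to its right, contradicting the preliminary claim. Thus $G=G_1\cap G_2 \in \chain^2$.

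For the converse, I would take $G = G_1 \cap G_2$ with chain graphs $G_1, G_2$ and choose a single good ordering. Because $G_1$ is a chain graph its column-neighborhoods are nested, so I can order the rows so that each column-neighborhood of $G_1$ is a suffix; symmetrically, since $G_2$ is a chain graph its row-neighborhoods are nested, so I can order the columns so that each row-neighborhood of $G_2$ is a prefix. These two choices act on disjoint index sets (row order versus column order) and hence do not conflict. In the resulting matrix of $G$, take any zero $(u,v)$; as $G=G_1\cap G_2$ it is a non-edge of $G_1$ or of $G_2$. If it is a non-edge of $G_1$, then since $N_{G_1}(v)$ is a suffix of rows and $u\notin N_{G_1}(v)$, every row above $u$ is also outside $N_{G_1}(v)$, so every entry above $(u,v)$ is zero and $(u,v)$ is up-free; if it is a non-edge of $G_2$, the prefix property symmetrically makes $(u,v)$ right-free. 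By the preliminary claim the matrix is $P$-free, so $G$ is $P$-freeable.

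The crux is the asymmetric pairing in the construction: one chain controls the columns ``from the top'' and the other controls the rows ``from the right,'' and it is exactly this asymmetry that matches the oriented shape of $\begin{psmallmatrix}1 & * \\ 0 & 1\end{psmallmatrix}$. I expect the main pitfall to be the temptation to split the zeros symmetrically (for instance, sending every up-free zero to one chain and every right-free zero to the other); such a split is a valid cover of the zeros but need not yield nested families, so the resulting graphs may fail to be chain graphs. Getting the two canonical chains right---suffix-columns and prefix-rows---and observing that the two orderings live on independent index sets is what makes both implications go through cleanly.
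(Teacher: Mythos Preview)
Your proof is correct. Note, however, that the paper does not give its own proof of this lemma: it is quoted as a known result from \cite{shrestha2010orthogonal}. So there is no ``paper's proof'' to compare against directly.

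That said, your construction is exactly the one the paper later uses in the proof of \Cref{maximal_chain2_submatrix}: there, $G_1$ is built so that column $j$'s ones form the row-suffix starting at the topmost one of $A[\cdot,j]$, and $G_2$ so that row $i$'s ones form the column-prefix ending at the rightmost one of $A[i,\cdot]$ (phrased via point--ray representations rather than $\tau(j)$ and $\rho(i)$, but identical in content). Your preliminary claim---that $\begin{psmallmatrix}1&*\\0&1\end{psmallmatrix}$-freeness is equivalent to every zero being up-free or right-free---is also implicitly used in the paper's argument that $A_{1,2}$ is $\begin{psmallmatrix}1&*\\0&1\end{psmallmatrix}$-free because $A_1$ is $\begin{psmallmatrix}0&1\end{psmallmatrix}$-free and $A_2$ is $\begin{psmallmatrix}1\\0\end{psmallmatrix}$-free. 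One small edge case you may want to make explicit: if a row (resp.\ column) of $M$ has no ones, $\rho(i)$ (resp.\ $\tau(j)$) is undefined; in that case set the corresponding neighborhood in $G_2$ (resp.\ $G_1$) to be empty, which preserves nestedness and the containment $G\subseteq G_1\cap G_2$.
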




Given a matrix $A$, we write $A[i,j]$ to denote the entry at the $i$-th row and $j$-th column. Let $A$ and $B$ be two matrices of the same size. The \emph{Hadamard product} of $A$ and $B$, denoted by $A\odot B$, is defined by the entry-wise product of the corresponding entries. 

\begin{observation}
For two bipartite graphs $G_1$, $G_2$ on the same vertex set $U \cup V$ and fixed orderings of vertices, the Hadamard product of their biadjacency matrices is a biadjacency matrix of $G_1 \cap G_2$ with respect to the fixed orderings.
\end{observation}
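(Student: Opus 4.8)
The plan is to prove the claimed matrix identity entry-by-entry. Fix the orderings $U = \{u_1, \ldots, u_m\}$ and $V = \{v_1, \ldots, v_n\}$ shared by both graphs, and let $A$ and $B$ denote the biadjacency matrices of $G_1 = (U \cup V, E_1)$ and $G_2 = (U \cup V, E_2)$, respectively, with respect to these orderings. Let $C$ denote the biadjacency matrix of $G_1 \cap G_2 = (U \cup V, E_1 \cap E_2)$, again with respect to the same orderings. The goal reduces to establishing the equality $A \odot B = C$, and since two matrices of equal dimensions agree exactly when all of their entries agree, it suffices to verify $(A \odot B)[i,j] = C[i,j]$ for every pair $(i,j)$.

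For the entry-wise verification I would unwind the three relevant definitions. By the definition of the biadjacency matrix, $A[i,j] = 1$ if and only if $\{u_i, v_j\} \in E_1$, and likewise $B[i,j] = 1$ if and only if $\{u_i, v_j\} \in E_2$. By the definition of the Hadamard product, $(A \odot B)[i,j] = A[i,j] \cdot B[i,j]$; because all entries lie in $\{0,1\}$, this product equals $1$ precisely when both factors equal $1$, i.e.\ when $\{u_i, v_j\} \in E_1$ and $\{u_i, v_j\} \in E_2$ simultaneously. Finally, by the definition of graph intersection, $\{u_i, v_j\} \in E_1 \cap E_2$ holds in exactly this same case, which is in turn equivalent to $C[i,j] = 1$. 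Chaining these equivalences gives $(A \odot B)[i,j] = C[i,j]$ for all $i,j$, and hence $A \odot B = C$.

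There is essentially no hard step here: the statement follows immediately once one observes that multiplication of $0/1$ values realizes logical conjunction, which is exactly the membership condition for the intersection edge set. The only point that genuinely requires care — and the reason the hypothesis spells out \emph{fixed orderings of vertices} — is that the $(i,j)$ entry of each of the three matrices must refer to the very same vertex pair $(u_i, v_j)$; were the orderings of $U$ or $V$ allowed to differ between $A$, $B$, and $C$, the entry-wise product would no longer be aligned with the intersection. Given the shared orderings stated in the hypothesis, this alignment is automatic and the identity holds.
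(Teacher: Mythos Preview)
Your proof is correct; the paper itself states this as an observation without proof, treating it as immediate from the definitions. Your entry-by-entry unwinding is exactly the justification one would give if pressed, and your remark about the necessity of shared orderings is apt.
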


We write $A \leq B$ for matrices $A$ and $B$ if $A[i,j] \leq B[i,j]$ for all row $i$ and column $j$. Notice that, if $C = A \odot B$, then $C \leq A$ and $C \leq B$.

\section{Proof of Theorem~\ref{thm: main}}


\subsection{``Only if'' direction}
Let $G = (U \cup V,E)$. Since $G \in \chain^3$, we have that $G= G_1 \cap G_2 \cap G_3$ where each $G_i \in \chain$. 
Our goal is to show that there exists an ordering $L_U$ of vertices in $U$ and $L_V$ of vertices in $V$ such that the corresponding biadjacency matrix of $G$ is free of the claimed patterns.  
Since $G'= G_1 \cap G_2 \in \chain^2$, we define $L_U$ and $L_V$ to be the orderings of vertices that are guaranteed to exist from~\Cref{thm: chain2 freeable}. 
In particular, the biadjacency matrix $A'$ of $G'$ with rows and columns ordered by $L_U$ and $L_V$ is $D$-free where $D=\begin{psmallmatrix}
            1 & * \\
            0 & 1 
            \end{psmallmatrix}$. 
Let $A_3$ be the biadjacency matrix of $G_3$ whose rows and columns are ordered according to the orderings $L_U$ and $L_V$. 
Now we claim that the Hadamard product $A = A' \odot A_3$ is free of $\Gamma$ and $\Delta$.

\subsubsection*{Pattern $\Gamma$}

Let us assume by contradiction that $A$ contains $\Gamma$. Let the indices of these columns in $A$ be $j_1 < j_2 < j_3$ and the indices of the rows be $i_1 < i_2 < i_3$ such that the submatrix of $A$ in rows $\{i_1, i_2, i_3\}$ and columns $\{j_1, j_2, j_3\}$ matches with $\Gamma$. 
This implies that the submatrix of $A$ at rows $\{i_1, i_2\}$ and columns $\{j_2, j_3\}$ matches with $D$; therefore, $A'[i_1,j_2] = A'[i_2, j_3] = 1$. 
Since $D$ is forbidden in $A'$ (yellow highlight on the left in \cref{fig:freeable_free}), it must be the case that $A'[i_2, j_2]= 1$ (otherwise, $A'$ contains $D$), which implies that $A_3[i_2, j_2]=0$. 

We apply similar reasoning to the submatrix of $A$ at rows $\{i_2,i_3\}$ and columns $\{j_1, j_2\}$ and deduce that $A_3[i_3, j_1] = 0$ (blue highlight on the left in \cref{fig:freeable_free}). Since $A[i_2,j_1] = A[i_3,j_2] = 1$, we must have that $A_3[i_2,j_1] = A_3[i_3,j_2] = 1$. 
Therefore, the pattern $\begin{psmallmatrix}
1 & 0  \\
0 & 1  
\end{psmallmatrix}$  matches with  the submatrix of $A_3$ at the rows $\{i_2, i_3\}$ and columns $\{j_1,j_2\}$. 
This contradicts the fact that $G_3$ is a chain graph (in particular~\Cref{thm: chain induced matching}). 

\subsubsection*{Pattern $\Delta$}

The fact that $A$ avoids $\Delta$ can be argued similarly.  Let the indices of these columns in $A$ be $j_1 < j_2 < j_3$ and the indices of the rows be $i_1 < i_2 < i_3$ such that the submatrix of $A$ at the rows $\{i_1, i_2, i_3\}$ and columns $\{j_1, j_2, j_3\}$ matches with $\Delta$, which implies that the submatrix at $\{i_1,i_2\}$ and $\{j_1,j_2\}$ matches with $D$; therefore $A'[i_1,j_1]= A'[i_2,j_2]=1$. Since $D$ is forbidden in $A'$, we must have $A'[i_2,j_1]=1$ which implies $A_3[i_2,j_1]=0$. 

We apply similar reasoning to the submatrix of $A$ at rows $\{i_2,i_3\}$ and columns $\{j_2, j_3\}$ and deduce that $A_3[i_3, j_2] = 0$ (blue highlight on the right figure in \cref{fig:freeable_free}).
Since $A[i_2,j_2] = A[i_3,j_1] = 1$, we have $A_3[i_2,j_2]=  A_3[i_3,j_1]=1$. Altogether, we have the submatrix of $A_3$ at the rows $\{i_2,i_3\}$ and columns $\{j_1,j_2\}$ matches with the pattern $\begin{psmallmatrix}
1 & 0  \\
0 & 1  
\end{psmallmatrix}$, a contradiction to~\Cref{thm: chain induced matching}.

\begin{figure}[h]
    \centering
    \includegraphics[scale=0.25]{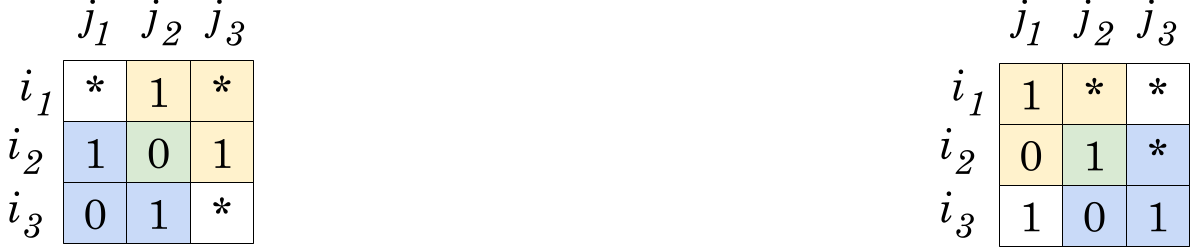}
    \caption{$A'$ cannot contain $D$ as a submatrix of $\Gamma$ or $\Delta$ on the left and right respectively.}
    \label{fig:freeable_free}
\end{figure}

\subsection{``if'' Direction}


Given a graph $G = (U \cup V, E)$ with a biadjacency matrix $A$ that does not contain the submatrices $\Gamma$ and $\Delta$, we will show that $G \in \chain^{3}$. 
Let $L_U$ and $L_V$ be the orderings of $U$ and $V$ according to rows and columns of $A$. 

\begin{lemma} \label{maximal_chain2_submatrix}
Given a graph $G = (U \cup V, E)$ with a biadjacency matrix $A$, that avoids $\Gamma$ or $\Delta$ as a submatrix, there exists a graph $G_{1,2} \in \chain^2$ with a biadjacency matrix $A_{1,2}$ such that 
\begin{itemize}
    \item[(a)] \label{maximal_chain2_submatrix_a}
    $A_{1,2}$ is free from the submarix
        $\begin{psmallmatrix}
         1 & * \\
        0 & 1 
        \end{psmallmatrix}$,
    \item[(b)] $A \leq A_{1,2}$ 
    \item[(c)] Define $\widetilde{A}$ by replacing each $0$-entry in $A$ with $0'$ if the corresponding entry of $A_{1,2}$ is zero; and with $0^*$ otherwise (so the entries in $\widetilde{A}$ are in $\{1,0',0^*\}$.) Then every $0^*$ in $\widetilde{A}$ is part of some submatrix $\begin{psmallmatrix} 
         1 & * \\
        0^* & 1 
        \end{psmallmatrix}$ of $\widetilde{A}$. 
\end{itemize}
\end{lemma}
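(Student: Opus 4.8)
The goal is to construct, from a $\Gamma$- and $\Delta$-free (here just one-pattern-free) matrix $A$, an enlargement $A_{1,2}$ that is genuinely in $\chain^2$, i.e.\ free of $D=\begin{psmallmatrix} 1 & * \\ 0 & 1 \end{psmallmatrix}$, while only turning some zeros of $A$ into ones and keeping tight control over which zeros get flipped. The natural plan is to build $A_{1,2}$ greedily by \emph{closing} $A$ under the $D$-pattern: whenever we find rows $i<i'$ and columns $j<j'$ with $A[i,j]=1$, $A[i',j]=0$, $A[i',j']=1$ (the forbidden occurrence of $D$, where the top-right $*$ is unconstrained), we are forced to set the offending $0$-entry $A[i',j]$ to $1$. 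I would define $A_{1,2}$ to be the entrywise-minimal $0/1$ matrix satisfying $A\le A_{1,2}$ that is $D$-free; equivalently, $A_{1,2}[i',j]=1$ iff there exist $i\le i'$ and $j'\ge j$ with a $1$ at $(i,j)$ and at $(i',j')$ in $A_{1,2}$ after closure. Since flipping $0\to1$ only creates, never destroys, the $1$s needed to witness further $D$'s, the closure is monotone and reaches a unique fixpoint; that fixpoint is $D$-free by construction, giving (a), and $A\le A_{1,2}$ and integrality of entries give (b) and the well-definedness of $\widetilde A$.

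With $A_{1,2}$ in hand, (a) and (b) are essentially immediate from the construction, and the real content is (c): every flipped zero (every $0^*$) must sit at the bottom-left corner of a $D$-pattern \emph{in the marked matrix $\widetilde A$}, i.e.\ there are $i\le$ its row and $j'\ge$ its column with $1$'s at the top-left and bottom-right, while the flipped entry itself is displayed as $0^*$. The idea is that a zero gets flipped precisely because it was ``the obstruction'' to $D$-freeness, so it was forced by exactly such a configuration. The subtlety is that during the iterative closure the witnessing $1$'s at positions $(i,j)$ and $(i',j')$ might themselves be \emph{flipped} zeros rather than original $1$'s, so the witnessing $D$-pattern might only appear after several rounds; I need the final pattern to be visible in $\widetilde A$, whose $1$'s include the flipped entries, so this is consistent, but I must verify that the witness entries are truly $1$'s in $A_{1,2}$ (not $0^*$), which holds because $A_{1,2}$ is $D$-free and a $0^*$ cannot play the role of the two required $1$'s simultaneously with being the obstruction.

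I would prove (c) by analyzing the last flip that set a given entry $(r,c)$ to $1$. At that moment, the closure rule fired because of a $D$ occurrence with obstruction $(r,c)$: there is a row $r'\le r$ with a $1$ in column $c$ and a column $c'\ge c$ with a $1$ in row $r$, both $1$'s already present (possibly from earlier flips) in the current partial matrix, hence present in the final $A_{1,2}$. Translating to $\widetilde A$: the entry $(r,c)$ is marked $0^*$, the entry $(r',c)$ is a $1$, and $(r,c')$ is a $1$; the entry $(r',c')$ is unconstrained ($*$). This is exactly the claimed submatrix $\begin{psmallmatrix} 1 & * \\ 0^* & 1 \end{psmallmatrix}$ on rows $\{r',r\}$ and columns $\{c,c'\}$, establishing (c). The delicate point to get right is orientation: $\Gamma,\Delta$ are stated with $*$ in positions that match $D$'s top-right, and I must ensure the direction of the forced flip (bottom-left zero forced by a top-left and bottom-right one) matches the orientation of $D$ used throughout Section~3; a careful reading of $D=\begin{psmallmatrix} 1 & * \\ 0 & 1 \end{psmallmatrix}$ fixes this and is where I expect the main bookkeeping obstacle to lie.

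One caveat I would flag: the construction as described closes only against $D$ and does not by itself use $\Gamma$- or $\Delta$-freeness, so (a)--(c) as stated should hold for \emph{any} $A$, with $\Gamma$/$\Delta$-freeness reserved for the later steps that extract the third chain graph. I would double-check that the termination and fixpoint argument needs no such hypothesis, and if the intended statement secretly relies on it (for instance to guarantee that the marked $0^*$'s have additional structure), I would revisit the closure order and possibly define $A_{1,2}$ via the explicit characterization $A_{1,2}[i,j]=1 \iff \exists\, i'\le i,\ j'\ge j:\ A[i',j]=A[i,j']=1$ iterated to a fixpoint, which makes monotonicity and the corner-witness property transparent.
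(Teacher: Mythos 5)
Your construction route is viable and in fact produces the same matrix as the paper: the paper builds $A_{1,2}=A_1\odot A_2$ explicitly, where $A_1$ fills each row of $A$ with ones up to that row's last $1$ (leftward rays over column-points) and $A_2$ fills each column downward from that column's first $1$, so that $A_{1,2}[i,j]=1$ iff there exist $i'\le i$ with $A[i',j]=1$ and $j'\ge j$ with $A[i,j']=1$. One can check this explicit matrix is $D$-free and dominates $A$, and that any $D$-free matrix above $A$ dominates it, so your minimal closure coincides with it and stabilizes after a single round from $A$ --- your iterated fixpoint machinery is unnecessary. Your caveat is also correct: neither construction uses $\Gamma$- or $\Delta$-freeness; the paper's proof of this lemma does not invoke that hypothesis either (it is reserved for the later lemmas).

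The genuine gap is in your proof of (c). The required pattern $\begin{psmallmatrix} 1 & * \\ 0^* & 1 \end{psmallmatrix}$ must occur in $\widetilde{A}$, whose $1$-entries are by definition exactly the $1$-entries of the \emph{original} matrix $A$; every flipped zero appears in $\widetilde{A}$ as $0^*$, not as $1$. Your ``last flip'' analysis only guarantees witnesses that are $1$ in the current partially-closed matrix, hence possibly $0^*$ in $\widetilde{A}$, and your attempted repair --- ``$\widetilde{A}$, whose $1$'s include the flipped entries'' --- is a misreading of the definition of $\widetilde{A}$; the subsequent appeal to $D$-freeness of $A_{1,2}$ never shows the two witnesses can be taken to be original $1$'s, which is the whole content of (c). The fix is to pick canonical witnesses: for $\widetilde{A}[i,j]=0^*$, let $i_{\min}$ be the topmost row with $A_{1,2}[i_{\min},j]=1$ and $j_{\max}$ the rightmost column with $A_{1,2}[i,j_{\max}]=1$; the explicit characterization above (or minimality of the closure) forces $A[i_{\min},j]=1$ and $A[i,j_{\max}]=1$, and $A[i,j]=0<A_{1,2}[i,j]$ gives $i_{\min}<i$ and $j_{\max}>j$, yielding the pattern on rows $\{i_{\min},i\}$ and columns $\{j,j_{\max}\}$. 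This is exactly what the paper does directly, taking as witnesses the first $1$ in column $j$ and the last $1$ in row $i$ of $A$ itself; with that repair your argument goes through.
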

\begin{proof}
We construct graph $G_1 \in \chain$ by describing its point-ray representation: $V = \{v_1, \ldots v_{|V|}\}$ corresponds to points $P^{(1)} = \{p_1, \ldots, p_{|V|}\}$ placed from left to right; $U$ corresponds to the leftward rays $R^{(1)}=\{r_1,\ldots, r_{|V|}\}$. 
For each $i \in [|V|]$, place the starting point of $r_i$ between $p_j$ and $p_{j+1}$ (if it exists), where $j$ is the maximum integer for which $A[i,j]= 1$. 
Let $A_1$ be the biadjacency matrix of $G_1$ where rows are ordered according to the rays, while columns are ordered according to the points. Observe that $A_1 \geq A$ by construction. 

Analogously, we construct $G_2 \in \chain$; let $U = \{u_1, \ldots u_{|U|}\}$ correspond to points $P^{(2)} = \{p'_1, \ldots, p'_{|U|}\}$ placed from top to bottom; $V$ correspond to the downward rays, where each ray $r'_j$ starts between $p'_{i-1}$(if it exists) and $p'_{i}$ so that $i$ is the least index where $A[i,j]=1$. Let $A_2$ be the biadjacency matrix of $G_2$ where columns are ordered according to the rays, while rows are ordered according to the points. Similarly to before, $A_2 \geq A$.

Define $A_{1,2} = A_1 \odot A_2$ which is a biadjacency matrix representation of $G_{1,2} = G_1 \cap G_2$. 
Since the biadjacency matrices of $G_1$ and $G_2$ are respectively $\begin{psmallmatrix}
0 & 1 
\end{psmallmatrix}$- and $\begin{psmallmatrix}
1 \\
0
\end{psmallmatrix}$-free, then
 $A_{1,2}$ is free from submatrix
        $\begin{psmallmatrix}
         1 & * \\
        0 & 1 
        \end{psmallmatrix}$ which proves point (a) of \Cref{maximal_chain2_submatrix_a}.


Point (b) follows from the fact that $A_1 \geq A$ and $A_2 \geq A$. 

Now let us consider the entry $\widetilde{A}[i,j] = 0^*$. 
Based on the definition, we have that $A[i,j] = 0$ while $A_{1,2}[i,j] =1$. Let $j_{\max}$ be the maximum column such that $A[i,j_{\max}] = 1$; notice that $j_{\max} >j$ since $A_1[i,j]=1$ while $A[i,j]=0$. Similarly, let $i_{\min}$ be the minimum row such that $A[i_{\min}, j] =1$; again, we have that $i_{\min} <i$. 
This implies that the submatrix of $\widetilde{A}$ at rows $\{i_{\min}, i\}$ and $\{j, j_{\max}\}$ matches with pattern $\begin{psmallmatrix} 
         1 & * \\
        0^* & 1 
        \end{psmallmatrix}$. 
This completes the proof of  (c). 
\end{proof}


\begin{figure}
    \centering
    \includegraphics[width=0.9\linewidth]{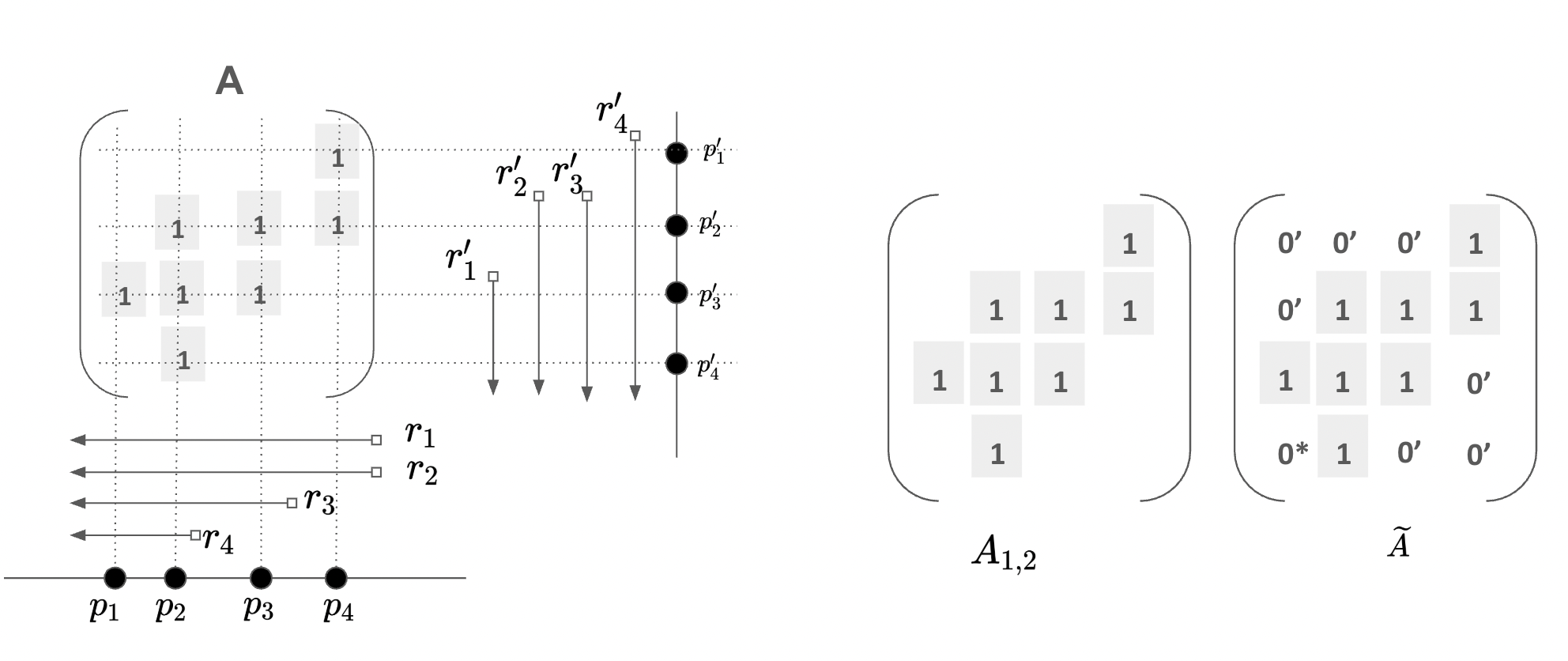}
    \caption{A construction of $A_{1,2}$ from matrix $A$. The ray-point representations of $G_1$ and $G_2$ are shown below and to the right of matrix $A$ respectively. The zero entries in the matrices are left blank. Notice that $A_{1,2}[4,1] \neq A[4,1]$ but other entries are equal. }
    \label{fig:construction}
\end{figure}

\begin{lemma} \label{A_free_matrices}
    The  matrix $\widetilde{A}$ does not include any of the following matrices as a submatrix: 
    $\begin{psmallmatrix}
 0^* & 1 \\
1 & 0^* 
\end{psmallmatrix}$, $\begin{psmallmatrix}
 1 & 0^* \\
0^* & 1 
\end{psmallmatrix}$ , $\begin{psmallmatrix}
 1 & * \\
0' & 1 
\end{psmallmatrix}$
, $\begin{psmallmatrix}
 0^* & * \\
0' & 0^* 
\end{psmallmatrix}$
\end{lemma}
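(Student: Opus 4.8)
The plan is to assume for contradiction that $\widetilde{A}$ contains one of the four listed patterns and to rule out each of them separately, organised by a single guiding principle: every entry of $\widetilde{A}$ simultaneously records a value of $A$ and of $A_{1,2}$, namely $1 \leftrightarrow (A,A_{1,2})=(1,1)$, $0' \leftrightarrow (0,0)$, and $0^* \leftrightarrow (0,1)$. Consequently any occurrence can be read either inside $A$ (collapsing $0'$ and $0^*$ to $0$) or inside $A_{1,2}$ (collapsing $0^*$ and $1$ to $1$). I would use the first reading together with the $\Gamma$- and $\Delta$-freeness of $A$ for the two anti-diagonal patterns, and the second reading together with the $\begin{psmallmatrix}1 & * \\ 0 & 1\end{psmallmatrix}$-freeness of $A_{1,2}$ (point (a) of \Cref{maximal_chain2_submatrix}) for the two patterns that contain a $0'$.

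The two patterns $\begin{psmallmatrix}1 & * \\ 0' & 1\end{psmallmatrix}$ and $\begin{psmallmatrix}0^* & * \\ 0' & 0^*\end{psmallmatrix}$ I expect to dispose of immediately by reading inside $A_{1,2}$. In both, the left column carries a $1$ (coming from a $1$ or a $0^*$) strictly above a genuine $0$ (the $0'$), and the bottom-right entry is again a $1$; hence the occurrence is literally $\begin{psmallmatrix}1 & * \\ 0 & 1\end{psmallmatrix}$ in $A_{1,2}$, contradicting point (a). No appeal to point (c) is needed here.

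For the anti-diagonal patterns I would read inside $A$ and first invoke point (c) of \Cref{maximal_chain2_submatrix} to materialise the extra $1$-entries that upgrade the $2\times 2$ occurrence to a forbidden $3\times 3$ pattern. Suppose $\begin{psmallmatrix}1 & 0^* \\ 0^* & 1\end{psmallmatrix}$ occupies rows $i_1<i_2$ and columns $j_1<j_2$. Applying point (c) to the $0^*$ at $(i_1,j_2)$ yields a row $i_0<i_1$ with $A[i_0,j_2]=1$ and a column $j_3>j_2$ with $A[i_1,j_3]=1$; the submatrix on $\{i_0,i_1,i_2\}\times\{j_1,j_2,j_3\}$ then reads $\begin{psmallmatrix}* & 1 & * \\ 1 & 0 & 1 \\ 0 & 1 & *\end{psmallmatrix}=\Gamma$ in $A$. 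Symmetrically, for $\begin{psmallmatrix}0^* & 1 \\ 1 & 0^*\end{psmallmatrix}$ on rows $i_1<i_2$ and columns $j_1<j_2$, point (c) applied to the $0^*$ at $(i_1,j_1)$ supplies a $1$ above it in column $j_1$ (at some row $i_0<i_1$), and applied to the $0^*$ at $(i_2,j_2)$ supplies a $1$ to its right in row $i_2$ (at some column $j_3>j_2$); the submatrix on $\{i_0,i_1,i_2\}\times\{j_1,j_2,j_3\}$ reads $\begin{psmallmatrix}1 & * & * \\ 0 & 1 & * \\ 1 & 0 & 1\end{psmallmatrix}=\Delta$. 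Both contradict the hypothesis that $A$ is $\Gamma$- and $\Delta$-free.

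The main obstacle is bookkeeping rather than depth: one must keep the three-symbol alphabet $\{1,0',0^*\}$ and the two readings straight, and, for the anti-diagonal cases, verify that the $1$-entries furnished by point (c) land in exactly the right quadrant — strictly above within the column, respectively strictly to the right within the row, of each $0^*$ — so that they occupy the outer row $i_0$ and outer column $j_3$ needed to complete $\Gamma$ or $\Delta$. The required strict inequalities $i_0<i_1$ and $j_3>j_2$ are precisely what a $0^*$ guarantees, since $A[i,j]=0$ while $A_{1,2}[i,j]=1$ forces the witnessing $1$'s of the ray-point construction to lie off the $0^*$'s own row and column.
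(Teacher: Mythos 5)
Your proposal is correct and takes essentially the same approach as the paper: you rule out $\begin{psmallmatrix} 1 & * \\ 0' & 1 \end{psmallmatrix}$ and $\begin{psmallmatrix} 0^* & * \\ 0' & 0^* \end{psmallmatrix}$ by reading the occurrence inside $A_{1,2}$ (where $1,0^*\mapsto 1$ and $0'\mapsto 0$) against point (a) of \Cref{maximal_chain2_submatrix}, and you use point (c) to supply the $1$ strictly above and the $1$ strictly to the right of the relevant $0^*$ entries, upgrading the two anti-diagonal $2\times 2$ patterns to occurrences of $\Gamma$ and $\Delta$ in $A$. Your writeup is in fact more explicit than the paper's (which dismisses the $\Delta$ case with ``similar reasoning''), correctly noting that the $\Delta$ case requires applying point (c) to each of the two $0^*$ entries separately.
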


\begin{proof}
 If $\widetilde{A}$ includes 
 $\begin{psmallmatrix}
 1 & 0^* \\
0^* & 1 
\end{psmallmatrix}$ 
 as a submatrix, then according to \Cref{maximal_chain2_submatrix}, there  exists a $1$-entry to the right and above the $0^*$. 
These $1$-entries with the submatrix 
 $\begin{psmallmatrix}
 1 & 0^* \\
0^* & 1 
\end{psmallmatrix} $, implies that $\widetilde{A}$ contains  $\begin{psmallmatrix}
 * & 1 & * \\ 
 1 & 0^* & 1 \\
0^* & 1 & * 
\end{psmallmatrix}$ and therefore $A$ must contain pattern $\Gamma$, which is a contradiction.
With similar reasoning,  $\widetilde{A}$ is  $\begin{psmallmatrix}
 0^* & 1 \\
1 & 0^* 
\end{psmallmatrix}$-free, or otherwise, $A$ contains pattern $\Delta$.

Recall that $A_{1,2}$ does contain  $\begin{psmallmatrix}
 1 & * \\
0 & 1 
\end{psmallmatrix}$. 
If $\widetilde{A}$ contains $\begin{psmallmatrix}
 1 & * \\
0' & 1 
\end{psmallmatrix}$, by our definition, we have that $A_{1,2}$ contains $\begin{psmallmatrix}
 1 & * \\
0 & 1 
\end{psmallmatrix}$ (since $0'$ corresponds to the matrix entry where $A_{1,2}[i,j] = A[i,j]= 0$), a contradiction. Similarly, if $\widetilde{A}$ contains $\begin{psmallmatrix}
 0^* & * \\
0' & 0^* 
\end{psmallmatrix}$, then $A_{1,2}$ contains $\begin{psmallmatrix}
 1 & * \\
0 & 1 
\end{psmallmatrix}$, a contradiction. 
\end{proof}

Next, we show an algorithm to sort the columns to exclude $\begin{psmallmatrix} 1 & 0^* \end{psmallmatrix}$ as a submatrix. This ordering will give us $A_3$, such that $A = A_{1,2} \odot A_3$, since $0^*$ are the zeroes in $A$ not in $A_{1,2}$

\begin{algorithm}[hbt!]
\caption{Reordering the columns of $\widetilde{A}$ }\label{alg:U_order}
  \DontPrintSemicolon
    $L_3 \gets \emptyset$\;
    \For{column $j \gets 1$ \KwTo $|V|$}{
        Add $j$ to the end of $L_3$ \;
        Let $S_j = \{i: \widetilde{A}[i, j]= 0^*\}$ \;
        \If{$S_j \neq \emptyset$} {
            Let $k_j$ be the leftmost column according to the ordering $L_3$ such that $\widetilde{A}[i, k_j] = 1$ for some $i \in S_j$ \; 
            In $L_3$ move $j$ to the left of $k_j$ \;
        }
    }
\end{algorithm}

\begin{lemma} \label{direction2}
The biadjacency matrix $\widetilde{A}$ with columns sorted according to $L_3$ is $\begin{psmallmatrix} 1 & 0^* \end{psmallmatrix}$-free.
\end{lemma}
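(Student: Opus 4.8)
The plan is to read the target condition combinatorially. A matrix is $\begin{psmallmatrix} 1 & 0^* \end{psmallmatrix}$-free precisely when, in every row, all the $0^*$ entries precede all the $1$ entries (the $0'$ entries are irrelevant to this pattern). So I want to show that the ordering $L_3$ produced by Algorithm~\ref{alg:U_order} admits no \emph{bad pair}: a pair of columns $c,d$ with $c$ to the left of $d$ in $L_3$, together with a row $\rho$ satisfying $\widetilde{A}[\rho,c]=1$ and $\widetilde{A}[\rho,d]=0^*$. I would argue by contradiction through a minimal counterexample: assume a bad pair exists, and among all bad pairs pick one, say $(c,d)$ with witness $\rho$, minimizing the $L_3$-distance between $c$ and $d$. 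The enabling observation is that a column's position relative to the already-placed columns is frozen once it is inserted, since every later iteration moves only the newly added column; hence it suffices to reason about the moment the later of $c,d$ is inserted.

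I then split on the insertion order. If $c$ is inserted before $d$, the contradiction is immediate from the greedy rule: when $d$ is processed we have $\rho \in S_d$, and $c$ is already present carrying a $1$ in row $\rho$, so $c$ is a candidate for $k_d$; since $d$ is inserted to the left of $k_d$, it ends up to the left of $c$, contradicting that $(c,d)$ is a bad pair. The substantive case is when $c$ is inserted \emph{after} $d$, because there the rule positions $c$ using its $0^*$ rows $S_c$ rather than the offending $1$ in row $\rho$, and so does not directly forbid the violation. Let $k_c$ be the column chosen by the algorithm (so $c$ sits immediately to its left) and let $\rho^*\in S_c$ witness $\widetilde{A}[\rho^*,k_c]=1$. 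Since $d$ lies to the right of $c$, either $d=k_c$ or $d$ is strictly to the right of $k_c$. I would inspect $\widetilde{A}[\rho^*,d]$: if it equals $1$ (which is automatic when $d=k_c$), then the four entries in rows $\{\rho,\rho^*\}$ and columns $\{c,d\}$ form, in whatever original row/column order these indices happen to sit, one of the forbidden crosses $\begin{psmallmatrix} 1 & 0^* \\ 0^* & 1 \end{psmallmatrix}$ or $\begin{psmallmatrix} 0^* & 1 \\ 1 & 0^* \end{psmallmatrix}$, contradicting Lemma~\ref{A_free_matrices}. Otherwise $\widetilde{A}[\rho^*,d]=0^*$, and then $(k_c,d)$ is itself a bad pair witnessed by $\rho^*$; but $k_c$ lies strictly to the right of $c$ while still left of $d$, so its $L_3$-distance to $d$ is strictly smaller, contradicting the minimality of $(c,d)$.

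The step I expect to be the main obstacle is exactly this last case — a $1$ placed to the left of a $0^*$ by inserting $c$ after $d$ — since the algorithm only ever controls the $0^*$ side of the column being inserted, never the $1$ side. The resolution couples two ingredients: a descent on $L_3$-distance and the forbidden $0^*/1$ crosses of Lemma~\ref{A_free_matrices}. Every attempt to realize such a violation either collapses into a forbidden $2\times 2$ submatrix of $\widetilde{A}$ or can be traded for a strictly tighter bad pair, and the latter cannot recur indefinitely, so the minimal counterexample is impossible. To complete the write-up I would finally verify the two routine sanity points the argument leans on, both read directly off Algorithm~\ref{alg:U_order}: that $S_c\neq\emptyset$ whenever $c$ must end up to the left of an earlier column (otherwise $c$ stays at the far right and cannot precede $d$), and that inserting a later column never reorders previously placed columns.
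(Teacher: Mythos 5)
Your argument follows essentially the paper's route — the paper runs an induction over the insertion steps of \Cref{alg:U_order}, which your minimal-counterexample-plus-frozen-relative-order formulation exactly reproduces; your first case (the $1$-column inserted first) is the paper's observation that $\ell$ cannot lie to the left of $j$, and your descent via $(k_c,d)$ plays the role of their induction hypothesis. However, there is a genuine gap in your main case: you treat $\widetilde{A}[\rho^*,d]$ as if it could only be $1$ or $0^*$, but the entries of $\widetilde{A}$ come in three kinds, and nothing rules out $\widetilde{A}[\rho^*,d]=0'$. In that case your argument produces no contradiction: the pair $(k_c,d)$ witnessed by $\rho^*$ is \emph{not} a bad pair, since a $0'$ does not participate in the pattern $\begin{psmallmatrix} 1 & 0^* \end{psmallmatrix}$, so the descent does not fire; and the $2\times 2$ submatrix at rows $\{\rho,\rho^*\}$ and columns $\{c,d\}$ contains that $0'$, so it is not one of the two forbidden crosses either (e.g.\ with $\rho^*$ above $\rho$ it reads $\begin{psmallmatrix} 0' & 0^* \\ 0^* & 1 \end{psmallmatrix}$, which matches none of the patterns of Lemma~\ref{A_free_matrices}).

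Closing this case requires a second case split that you never perform, on the value of $\widetilde{A}[\rho,k_c]$ — the entry of the offending row under the anchor column, which your write-up never inspects. This is precisely the paper's Cases 1, 3 and 4: if $\widetilde{A}[\rho,k_c]=0^*$, the rows $\{\rho,\rho^*\}$ and columns $\{c,k_c\}$ form a forbidden cross; if $\widetilde{A}[\rho,k_c]=1$, then $(k_c,d)$ is a bad pair witnessed by $\rho$ (not $\rho^*$) and your descent applies; and if $\widetilde{A}[\rho,k_c]=0'$ as well, the contradiction comes from the remaining two forbidden patterns of Lemma~\ref{A_free_matrices}, namely $\begin{psmallmatrix} 1 & * \\ 0' & 1 \end{psmallmatrix}$ and $\begin{psmallmatrix} 0^* & * \\ 0' & 0^* \end{psmallmatrix}$, applied after distinguishing whether $\rho$ or $\rho^*$ comes first in the original row order (and using that $d$ and $k_c$ precede $c$ in the original column order, since they were inserted earlier). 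It is telling that your proof invokes only the two cross patterns of Lemma~\ref{A_free_matrices}: that lemma forbids four patterns, and the two $0'$-patterns are there exactly because this case needs them, so an argument that never uses them cannot be complete.
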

\begin{proof}
$L_3$ is initially an empty list. In each iteration, we add a column into $L_3$ according to the algorithm. 
Denote by $A^j$ the submatrix of $\widetilde{A}$ with columns appearing in the list $L_3$ and ordered according to $L_3$ after finishing iteration $j$; note that $A^j$ contains all the rows ordered in the same way as $\widetilde{A}$. So, $A^j$ is an $n$-by-$j$ matrix. 

We will prove the following statement by induction, which would imply the lemma. 
\begin{quote}
For all $j =1, \ldots, n$, the matrix $A^j$ is $\begin{psmallmatrix} 1 & 0^* \end{psmallmatrix}$-free.    
\end{quote}

When $j=1$ the statement trivially holds. 

Consider the end of iteration $j$ where we know the statement holds for $A^{j-1}$. 
If $S_j = \emptyset$ (meaning that column $j$ does not contain $0^*$), adding this column to the end of $A^{j-1}$ cannot create the pattern $\begin{psmallmatrix} 1 & 0^* \end{psmallmatrix}$, so the statement also holds for $A^j$.

Next, consider the case when $S_j \neq \emptyset$. Assume by contradiction that $A^{j}$ contains $P=\begin{psmallmatrix} 1 & 0^* \end{psmallmatrix}$. Suppose that the submatrix of $A^j$ at row $i$ and columns $\{\ell, j\}$ matches with $P$. 
It is impossible that $\ell$ is to the left of $j$ in $A^j$: Since $\widetilde{A}[i,\ell] = 1$ and $\widetilde{A}[i,j] = 0^*$, column $j$ must be moved to the left of $\ell$ according to the algorithm. 

So we must have column $\ell$ to the right of $j$ in $A^j$, so $\widetilde{A}[i,\ell] =0^*$ and $\widetilde{A}[i,j] = 1$. Let $i_{min} \in S_j$ be a row such that $\widetilde{A}[i_{min}, j] = 0^*$ and $\widetilde{A}[i_{min}, k_j] = 1$. 
Let $i_{min} \in S_j$ be a row such that $\widetilde{A}[i_{min}, j] = 0^*$ and $\widetilde{A}[i_{min}, k_j] = 1$.
The algorithm guarantees that column $j$ is to the left of $k_j$ in $A^j$. 



Note that $k_j \neq \ell$, because otherwise the submatrix of $\widetilde{A}$ at columns $\{\ell, j\}$ and rows $\{i, i_{min}\}$ matches with the pattern $\begin{psmallmatrix}
 0^* & 1 \\
1 & 0^* 
\end{psmallmatrix}$ or $\begin{psmallmatrix}
 1 & 0^* \\
0^* & 1 
\end{psmallmatrix}$ 
depending on the original column and row orders. 
Therefore, it must be the case that $\{j, k_j, \ell\}$ are three distinct columns. 
Moreover, they appear in matrix $A^j$ in this order. 

Let us consider the cases on the values of $\widetilde{A}[i_{\min}, \ell]$ and $\widetilde{A}[i,k_j]$. 

\begin{itemize}
    \item Case 1: $\widetilde{A}[i_{min}, \ell] = 1$, $\widetilde{A}[i, k_j] = 0^*$. This case implies that the submatrix of $\widetilde{A}$ at rows $\{i, i_{\min}\}$ and columns $\{j, k_j\}$ matches with a forbidden pattern, a contradiction to~\Cref{A_free_matrices}. 

    \item Case 2: $\widetilde{A}[i_{min}, \ell] = 0^*$. This case implies that the submatrix of $A^{j-1}$ at row $i_{\min}$ and columns $\{k_j, \ell\}$ matches with pattern $P$, a contradiction to the induction hypothesis. 

    \item Case 3: $\widetilde{A}[i, k_j] = 1$. This implies that the submatrix of $A^{j-1}$ at row $i$ and columns $\{k_j, \ell\}$ matches with $P$, again a contradiction. 

    \item Case 4: $\widetilde{A}[i, k_j] = \widetilde{A}[i_{min}, \ell] = 0'$. 
    We consider two subcases based on the relative positions of rows $i$ and $i_{\min}$. 
    If $i < i_{min}$ then, the submatrix of $\widetilde{A}$ at rows $\{i, i_{\min}$ and columns $\{\ell, j\}$ matches a forbidden submatrix (\Cref{A_free_matrices}) (we recall that $\ell$ is to the left of $j$ in $\widetilde{A}$.) If $i_{min} < i $, the submatrix of $\widetilde{A}$ at rows $\{i, i_{\min}\}$ and columns $\{k_j, j\}$ match again a forbidden pattern. See~\Cref{fig:columns_moved_row_order} for an illustration of this case. 
\end{itemize}
Since all cases lead to a contradiction, we complete the inductive proof. 
\end{proof}

\begin{figure}[h]
\centering
\includegraphics[scale=0.24]{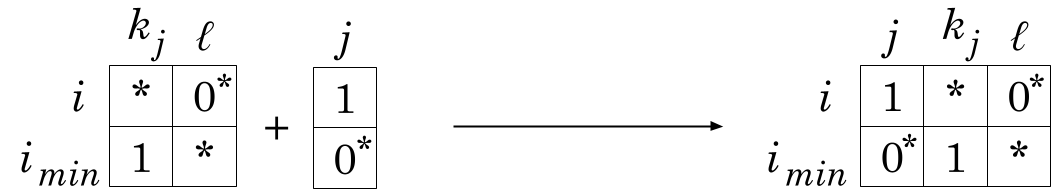}
\caption{On the left, we have $j$ yet to be inserted into the order, and on the right, the order $L$ is depicted.}
\label{fig:columns_moved}

\includegraphics[scale=0.24]{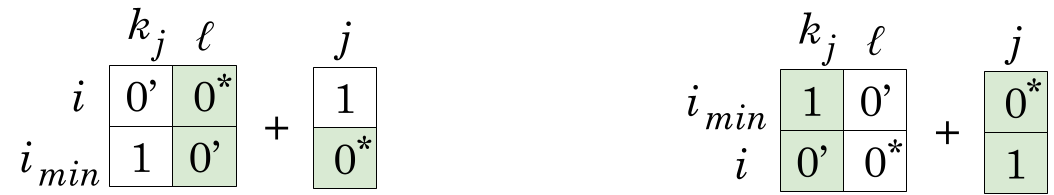}
\caption{Depicted are the forbidden submatrices with row orders $i < i_{min}$ and $i_{min} < i $ on the left and right respectively. }
\label{fig:columns_moved_row_order}
\end{figure}

\begin{lemma}
    There exists a chain graph $G_3\in \chain$ with the biadjacency matrix $A_3$ (and its extension $\widetilde{A}_3$)
    based on $L_3$, such that $A = A_3 \odot A_{1,2}$.
\end{lemma}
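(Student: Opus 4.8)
The plan is to read off $A_3$ directly from $\widetilde{A}$, exploiting the fact that the requirement $A = A_3 \odot A_{1,2}$ already pins down most of its entries while leaving the $0'$-entries completely free. Indeed, the identity forces $A_3[i,j]=1$ at every position where $\widetilde{A}[i,j]=1$ (there $A_{1,2}[i,j]=A[i,j]=1$) and $A_3[i,j]=0$ at every position where $\widetilde{A}[i,j]=0^*$ (there $A_{1,2}[i,j]=1$ but $A[i,j]=0$); at a $0'$-entry we have $A_{1,2}[i,j]=0$, so $(A_3\odot A_{1,2})[i,j]=0=A[i,j]$ regardless of the value of $A_3[i,j]$. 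The key idea is to spend this freedom at the $0'$-entries to turn $A_3$ into a chain matrix.

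Concretely, I would order the columns of $A_3$ by $L_3$ and, for each row $i$, let $t_i$ be the position of the rightmost $0^*$-entry of row $i$ in that order (with $t_i=0$ if the row has no $0^*$). Define $A_3[i,j]=0$ for every column at position $\le t_i$ and $A_3[i,j]=1$ for every column at position $>t_i$. Then, read in the $L_3$ order, each row of $A_3$ has the form $0\cdots0\,1\cdots1$, so $A_3$ is $\begin{psmallmatrix}1&0\end{psmallmatrix}$-free and hence, by \Cref{thm: chain freeable}, is the biadjacency matrix of a chain graph $G_3\in\chain$.

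It then remains to check two things. First, that this definition respects the forced entries: each $0^*$-entry lies at a position $\le t_i$ and is correctly set to $0$, while \Cref{direction2} --- the $\begin{psmallmatrix}1&0^*\end{psmallmatrix}$-freeness produced by \Cref{alg:U_order} --- guarantees that in the $L_3$ order no $1$-entry lies to the left of a $0^*$-entry in the same row, so every $1$-entry sits at a position $>t_i$ and is correctly set to $1$. Second, the Hadamard identity itself, which follows entrywise from the three-case split $\widetilde{A}[i,j]\in\{1,0^*,0'\}$ recorded above: in each case $(A_3\odot A_{1,2})[i,j]=A[i,j]$. Since the Hadamard product is entrywise, this equality is independent of the column order and therefore also holds with respect to the original ordering of $A$. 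The extension $\widetilde{A}_3$ merely marks those $0'$-entries that were rounded up to $1$ (the edges of $G_3$ not present in $G$, which the intersection with $G_{1,2}$ removes).

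The genuinely hard step --- separating, simultaneously in every row, the $0^*$-entries to the left of the $1$-entries --- has already been carried out in \Cref{direction2}; granting that, this lemma is essentially assembly. The only point that needs care is the observation that a $0'$-entry carries no constraint from the product (because $A_{1,2}$ vanishes there), so these entries may be freely rounded down to $0$ or up to $1$ according to the threshold $t_i$, completing each row into the clean chain pattern $0\cdots0\,1\cdots1$ without ever disturbing the value of $A$.
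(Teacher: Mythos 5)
Your proof is correct and follows essentially the same route as the paper: both arguments order the columns by $L_3$, use the $\begin{psmallmatrix}1 & 0^*\end{psmallmatrix}$-freeness of \Cref{direction2} to produce a chain matrix $A_3$ that is $1$ at every $1$-entry of $\widetilde{A}$ and $0$ at every $0^*$-entry, and then verify $A = A_3 \odot A_{1,2}$ by the same three-case entrywise check on $\{1, 0^*, 0'\}$. The only difference is cosmetic: the paper realizes $A_3$ geometrically via rightward rays starting at each row's leftmost $1$ (in the $L_3$ order), while you define the equivalent staircase matrix directly via the threshold at each row's rightmost $0^*$, the two choices differing only on the unconstrained $0'$-entries.
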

\begin{proof}
Consider the matrix $A'$ obtained from $\widetilde{A}$ by reordering the columns  according to $L_3$  in~\Cref{alg:U_order}. To avoid confusion with the rows and columns of $A$, we refer to rows and columns of $A'$ by vertices in $U$ and $V$ respectively. 
We construct a chain graph $G_3$ by ordering points on the line from left to right where each point $p_v$ corresponds to a column $v \in V$. For each row $u \in U$, we have a rightward ray that starts between column $v$ and $w$ where $w$ is the leftmost column for which $A'[u,w] = 1$. 
Since we can construct $G_3$ as an intersection graph of points and rays,  $G_3 \in \chain$. Let $A_3$ be the biadjacency matrix of $G_3$ where rows and columns are ordered according to $A$ (notice that this is a different order from $A'$). 
Clearly, $A \leq A_3$ by construction.

Now, we will show that $A = A_3 \odot A_{1,2}$. 
Since $A_{1,2}, A_3 \geq A$, we only need to argue that every zero in $A$ corresponds to a zero in  $A_{1,2}$ or $A_3$. 
Let us consider $A[i,j] = 0$.  
The first case is when $\widetilde{A}[i,j] = 0'$. In this case, $A_{1,2}[i,j] = 0$ and we are done. 

The other case is when $\widetilde{A}[i,j] = 0^*$. Assume that $A_3[i,j] = 1$ (for contradiction).
Let $u \in U$ and $v \in V$ be vertices corresponding to row $i$ and column $j$ respectively. 
Let $v_{\min}$ be the leftmost column for which $A'[u, v_{\min}] = 1$. It must be the case that $v_{\min}$ is to the left of $v$ in $A'$. Since $A'[u, v] = 0^*$, matrix $A'$ contains pattern $\begin{psmallmatrix} 1 & 0^* \end{psmallmatrix}$, a contradiction.

We conclude that $A = A_3 \odot A_{1,2}$. 
\end{proof}

In summary, we have constructed $G_{1,2} \in \chain^2$ and $G_3 \in \chain$ such that their biadjacency matrices satisfy $A = A_{1,2} \odot A_3$. This implies that $G \in \chain^3$, concluding the proof of the ``if'' direction.

\bibliography{ref}

\appendix

\end{document}